\newtheorem{theorem}{Theorem}
\newtheorem{proposition}[theorem]{Proposition}
\def\proofn{\smallskip\noindent{\mbox{\it Proof of Theorem 1.~}}}
\def\eop{\hfill$\square$}
\begin{document}
\title{On expected and von Neumann-Morgenstern utility functions}
\author{\textbf{Marina Pireddu}\thanks{Many thanks to Michele Gori for suggesting me this interesting problem and for his helpful comments and suggestions.} \\
Dipartimento di Matematica per le Decisioni, \\
Universit\`{a} degli Studi di Firenze \\
e-mail: marina.pireddu@unifi.it}
\maketitle

\begin{abstract}
In this note we analyze the relationship between the properties of von Neumann-Morgenstern utility functions and expected utility functions. 
More precisely, we investigate which of 
the regularity and concavity assumptions usually imposed on the latter transfer to the former and vice versa. In particular we obtain that, in order for the expected utility functions to fulfill such classical properties, it is enough to assume them for the von Neumann-Morgenstern utility functions.
\end{abstract}

\noindent
Let $G=C(S+1),$ where $C\ge 1$ is the number of commodities and $S\ge 1$ is the number of possible states of the world tomorrow. Household's preferences are represented by a utility function
$U:\mathbb{R}_{++}^{G}\to\mathbb{R}$. According to \cite{ali-00} that utility function is named ``expected utility function'' when it has the following form
\begin{equation}\label{eu}
U(x)=\sum_{s=1}^S a_s u(x_0,x_s),
\end{equation}
where $a_s\in(0,1),\,\underset{s=1}{\overset{S}{\sum}}a_s=1$ and $u:\mathbb{R}_{++}^{2C}\to\mathbb{R}$ is the so-called ``von Neumann-Morgenstern utility function''. See also \cite{mas-95}, Chapter 6, for a discussion on the terminology.\\ 
Classical assumptions on utility functions are as follows\footnote{For every positive integer $M,$ given
$v=(v_1,\dots,v_M),\,w=(w_1,\dots,w_M)\in \mathbb{R}^M$, we write
$v\gg w \mbox{  if  } v_i>w_i,$ for each $i\in\{1,\dots,M\}.$}
\begin{align}
&U\in C^{2}(\mathbb{R}_{++}^{G});\label{u1}\\
& \mbox{for every }x\in\mathbb{R}_{++}^{G},\; DU(x)>>0\,;\label{u2}\\
& \mbox{for every }v\in\mathbb{R}^{G}\setminus \{0\}\mbox{ and }x\in\mathbb{R}_{++}^{G},\, 
v \,D^{2}U(x)\,v<0\,;\label{u3}\\
& \mbox{for every }\underline{x}\in\mathbb{R}_{++}^{G},\;
\left\{x\in\mathbb{R}_{++}^{G}:U(x)\geq U(\underline{x})\right\}\mbox{ is closed in the topology of }\mathbb{R}^{G},\label{u4}
\end{align}
with $x=(x_0,x_1,\dots,x_S)\in \mathbb{R}_{++}^{G}$ and $x_i\in\mathbb{R}_{++}^{C},$ for $i\in\{0,1,\dots,S\}\,.$\\
Our aim is that of understanding what are the assumptions to impose on $u$ in order for $U$ in \eqref{eu} to satisfy \eqref{u1}-\eqref{u4}. In particular, we will prove that if $u$ satisfies the analogue of \eqref{u1}-\eqref{u4} on $\mathbb{R}_{++}^{2C},$ i.e.,
\begin{align}
&u\in C^{2}(\mathbb{R}_{++}^{2C});\label{u1m}\\
& \mbox{for every }x\in\mathbb{R}_{++}^{2C},\; Du(x)>>0\,;\label{u2m}\\
& \mbox{for every }w\in\mathbb{R}^{2C}\setminus \{0\}\mbox{ and }x\in\mathbb{R}_{++}^{2C},\, 
w \,D^{2}u(x)\,w<0\,;\label{u3m}\\
& \mbox{for every }\underline{x}\in\mathbb{R}_{++}^{2C},\;
\left\{x\in\mathbb{R}_{++}^{2C}:u(x)\geq u(\underline{x})\right\}\mbox{ is closed in the topology of }\mathbb{R}^{2C},\label{u4m}
\end{align}
then $U$ in \eqref{eu} fulfills \eqref{u1}-\eqref{u4}. Actually, for sake of completeness, we investigate the converse implication, too, obtaining the next result:
\begin{theorem}\label{th}
Let us assume that $u:\mathbb R^{2C}_{++}\to\mathbb R$ is lower unbounded and $U:\mathbb R^{G}_{++}\to\mathbb R$ is as in \eqref{eu}. Then $U$
satisfies \eqref{u1}-\eqref{u4} if and only if $u$ fulfills \eqref{u1m}-\eqref{u4m}. 
\end{theorem}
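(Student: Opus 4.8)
The plan is to prove the two implications separately, relying on two structural facts: the differential identities obtained by differentiating \eqref{eu} block by block, and the identity expressing $u$ as the restriction of $U$ to the diagonal of the state coordinates. Write $Du=(\partial_1u,\partial_2u)$ for the two $C$-dimensional partial gradients of $u$ (with respect to its first and second $\mathbb R^C$ block). For the sufficiency direction, fix $x=(x_0,x_1,\dots,x_S)$ and a direction $v=(v_0,v_1,\dots,v_S)$ with $v_i\in\mathbb R^C$. Differentiating \eqref{eu} gives $\partial_{x_0}U(x)=\sum_{s=1}^S a_s\,\partial_1u(x_0,x_s)$ and $\partial_{x_s}U(x)=a_s\,\partial_2u(x_0,x_s)$ for $s\ge1$, together with
\[v\,D^2U(x)\,v=\sum_{s=1}^S a_s\,(v_0,v_s)\,D^2u(x_0,x_s)\,(v_0,v_s).\]
Then \eqref{u1} holds because $U$ is a finite linear combination of the $C^2$ maps $x\mapsto u(x_0,x_s)$; \eqref{u2} holds because $a_s>0$ together with \eqref{u2m} make the two displayed gradients $\gg0$; and \eqref{u3} holds because each summand is $\le 0$ by \eqref{u3m}, while for $v\neq0$ at least one pair $(v_0,v_s)$ is nonzero (if some $v_s\ne0$ pick that $s$; if all $v_s=0$ then $v_0\ne0$ and each $(v_0,v_s)=(v_0,0)\ne0$), so the sum is strictly negative.

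The substantial step is \eqref{u4m}\,$\Rightarrow$\,\eqref{u4}, and this is where lower unboundedness enters. First I would convert \eqref{u4m} into boundary behaviour of $u$: fix $y^\ast\in\partial\mathbb R^{2C}_{++}$; for every level $c$ in the range of $u$ the set $\{u\ge c\}$ is closed by \eqref{u4m} and does not contain $y^\ast$, so a whole neighbourhood of $y^\ast$ carries $u<c$, giving $\limsup_{y\to y^\ast}u(y)\le c$. Since $u$ is continuous and lower unbounded, its range contains every $c<\sup u$; letting $c\to-\infty$ yields $\lim_{y\to y^\ast}u(y)=-\infty$. I would then transfer this to $U$: a zero coordinate of a point $x^\ast\in\partial\mathbb R^G_{++}$ lies either in $x_0^\ast$, so that every pair $(x_0^\ast,x_s^\ast)$ is a boundary point of $\mathbb R^{2C}_{++}$ and every summand of \eqref{eu} tends to $-\infty$, or in some $x_{s'}^\ast$, so that the $s'$-th summand tends to $-\infty$ while the remaining ones stay bounded above; as all $a_s>0$, in both cases $U(x)\to-\infty$ as $x\to x^\ast$. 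Continuity of $U$ on $\mathbb R^G_{++}$ together with this blow-down keeps each upper contour set away from $\partial\mathbb R^G_{++}$, hence closed in $\mathbb R^G$, which is \eqref{u4}.

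For the necessity direction the key observation is that $\sum_s a_s=1$ forces $u(y_0,y_1)=U(y_0,y_1,\dots,y_1)$; that is, $u=U\circ\phi$ with $\phi(y_0,y_1)=(y_0,y_1,\dots,y_1)$ a linear injective embedding of $\mathbb R^{2C}_{++}$ into $\mathbb R^G_{++}$. Condition \eqref{u1m} is then immediate, and differentiating $u=U\circ\phi$ gives $\partial_1u=\partial_{x_0}U$ and $\partial_2u=\sum_{s=1}^S\partial_{x_s}U$, both $\gg0$ by \eqref{u2}, so \eqref{u2m} holds; likewise $w\,D^2u\,w=(\phi w)\,D^2U\,(\phi w)$, which is strictly negative for $w\neq0$ by injectivity of $\phi$ and \eqref{u3}, giving \eqref{u3m}. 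For \eqref{u4m} I would argue by contradiction: a sequence in $\{u\ge c\}$ converging to some $y^\ast\in\partial\mathbb R^{2C}_{++}$ is carried by $\phi$ into $\{U\ge c\}$ and converges to $\phi(y^\ast)\in\partial\mathbb R^G_{++}$, contradicting the closedness in $\mathbb R^G$ of the upper contour set $\{U\ge U(\phi(\underline y))\}$ granted by \eqref{u4}; here $c=u(\underline y)=U(\phi(\underline y))$.

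The genuine difficulty is concentrated in the equivalence of the two closedness conditions, the differential parts being routine. The most delicate point is the first half of the sufficiency argument: translating the purely topological statement \eqref{u4m} into the pointwise limit $u\to-\infty$ at the boundary, and recognising that lower unboundedness is exactly what promotes the bound $\limsup_{y\to y^\ast}u\le c$ (available for all $c$ in the range) to an honest limit $-\infty$. Dropping that hypothesis one obtains only $u\to\inf u$ along the boundary, and the term-by-term estimate for $U$ then fails precisely in the case where the vanishing coordinate occurs in some $x_{s'}$ but not in $x_0$, so the equivalence can genuinely break down.
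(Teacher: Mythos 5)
Your proposal is correct, and the differential parts (conditions \eqref{u1}--\eqref{u3} and \eqref{u1m}--\eqref{u3m}) coincide with the paper's argument: the same block-wise differentiation of \eqref{eu}, the same diagonal embedding $u=U\circ\phi$ coming from $\sum_s a_s=1$ (the paper's identity \eqref{comp}), and the same observation that at least one pair $(v_0,v_s)$ is nonnull. Where you genuinely diverge is in the treatment of the closedness conditions \eqref{u4} and \eqref{u4m}. The paper isolates this as a standalone result (Proposition \ref{pr}): for a continuous, lower unbounded $F$ on $\mathbb{R}^M_{++}$, closedness of all upper contour sets is \emph{equivalent} to $F\to-\infty$ along sequences approaching $\partial\mathbb{R}^M_{++}$, and then applies this equivalence four times (to $u$ and to $U$, in each direction). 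You instead inline the needed implications: a direct neighbourhood argument turning \eqref{u4m} into boundary blow-down of $u$ (your $\limsup\le c$ for all $c$ in the range, then $c\to-\infty$ using lower unboundedness), and, for the converse direction \eqref{u4}$\Rightarrow$\eqref{u4m}, a sequence-transfer argument through $\phi$ that bypasses the blow-down characterization altogether. Your route buys two small things: it makes visible that lower unboundedness of $u$ is used only in the direction \eqref{u4m}$\Rightarrow$\eqref{u4} (the paper's route through Proposition \ref{pr} invokes it for both directions, since the proposition's hypotheses require it), and it avoids stating a separate lemma; the paper's route buys a cleaner, reusable equivalence and a symmetric presentation. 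One point of care in your necessity argument: non-closedness of $\{u\ge u(\underline y)\}$ a priori yields a sequence converging to a limit that is either a boundary point of $\mathbb{R}^{2C}_{++}$ \emph{or} an interior point with $u<u(\underline y)$; you only treat the boundary case, and should note that the interior case is excluded by the continuity of $u$, which you have already secured via \eqref{u1m} (the paper's Proposition \ref{pr} makes exactly this remark).
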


\noindent
Along the course of the proof of Theorem \ref{th}, we will use the following result:
\begin{proposition}\label{pr}
Let $F:\mathbb R^M_{++}\to\mathbb R$ be continuous and lower unbounded. Then 
$$F(x^{[n]})\to -\infty,\,\,\forall \{x^{[n]}:n\in\mathbb N\}\subseteq \mathbb R^M_{++},\,\, x^{[n]}\to\partial\mathbb R^M_{++}$$
if and only if
$$\mbox{for every }\underline{x}\in\mathbb{R}_{++}^{M},\;
\left\{x\in\mathbb{R}_{++}^{M}:F(x)\geq F(\underline{x})\right\}\mbox{ is closed in the topology of }\mathbb{R}^{M}.$$
\end{proposition}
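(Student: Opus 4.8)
The plan is to prove the two implications separately, reading the symbol $x^{[n]}\to\partial\mathbb{R}^M_{++}$ as: $x^{[n]}$ converges in $\mathbb{R}^M$ to some point $\bar{x}$ of the topological boundary $\partial\mathbb{R}^M_{++}$, i.e. $\bar{x}\in\overline{\mathbb{R}^M_{++}}$ with at least one vanishing coordinate (all coordinates remaining finite). This reading is important: were coordinates allowed to escape to $+\infty$, the equivalence would break, since closedness of the upper contour sets constrains the behaviour of $F$ only near the finite boundary.

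First I would establish the implication ``divergence condition $\Rightarrow$ closedness condition''. Fix $\underline{x}\in\mathbb{R}^M_{++}$ and abbreviate $C:=\{x\in\mathbb{R}^M_{++}:F(x)\geq F(\underline{x})\}$. To show $C$ is closed in $\mathbb{R}^M$, I take an arbitrary $\bar{x}\in\mathbb{R}^M$ that is the limit of a sequence $x^{[n]}\in C$ and argue that $\bar{x}\in C$, splitting into two cases according to where $\bar{x}$ lies. If $\bar{x}\in\mathbb{R}^M_{++}$, continuity of $F$ passes the inequality $F(x^{[n]})\geq F(\underline{x})$ to the limit, giving $F(\bar{x})\geq F(\underline{x})$ and hence $\bar{x}\in C$. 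If instead $\bar{x}\in\partial\mathbb{R}^M_{++}$, then $x^{[n]}\to\partial\mathbb{R}^M_{++}$, so the divergence hypothesis forces $F(x^{[n]})\to-\infty$, contradicting $F(x^{[n]})\geq F(\underline{x})$; thus this case cannot occur, and $C$ is closed. Only continuity is used here; lower unboundedness plays no role in this direction.

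Next I would prove the converse ``closedness condition $\Rightarrow$ divergence condition'' by contraposition, and this is the step where lower unboundedness enters decisively. Suppose the divergence condition fails: there is a sequence $x^{[n]}\to\bar{x}\in\partial\mathbb{R}^M_{++}$ with $F(x^{[n]})\not\to-\infty$. Passing to a subsequence, I may assume $F(x^{[n]})\geq c$ for some finite constant $c$ and all $n$. Since $F$ is lower unbounded, $\inf_{\mathbb{R}^M_{++}}F=-\infty<c$, so I can choose $\underline{x}\in\mathbb{R}^M_{++}$ with $F(\underline{x})\leq c$. Then each $x^{[n]}$ lies in $C=\{x\in\mathbb{R}^M_{++}:F(x)\geq F(\underline{x})\}$, while its limit $\bar{x}\in\partial\mathbb{R}^M_{++}$ does not belong to $\mathbb{R}^M_{++}$ and hence not to $C$; thus $\bar{x}\in\overline{C}\setminus C$, showing that $C$ is not closed in $\mathbb{R}^M$ and contradicting the closedness condition.

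I expect the main obstacle to be precisely this production of the comparison bundle $\underline{x}$ in the reverse implication, together with making transparent why lower unboundedness is indispensable. The point is that a merely bounded-below $F$ could have all upper contour sets closed while failing to diverge to $-\infty$ at the boundary; the scalar example $F(x)=x$ on $\mathbb{R}_{++}$, whose contour sets $[\underline{x},\infty)$ are closed yet $F(1/n)\to 0\neq-\infty$, shows that the hypothesis cannot be dropped. Everything else reduces to the routine case analysis above plus continuity of $F$.
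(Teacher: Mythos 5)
Your proof is correct and follows essentially the same route as the paper: continuity handles the interior limit points in one direction, and lower unboundedness produces the comparison point $\underline{x}$ that breaks closedness in the other. Your choice of $\underline{x}$ with $F(\underline{x})\leq c$ is in fact marginally cleaner than the paper's claim that there exists $x^*$ with $F(x^*)=-K$ exactly (which strictly speaking would require the intermediate value theorem, whereas the inequality is all that is needed).
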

\begin{proof}
$\Rightarrow:$ Assume by contradiction that $F(x^{[n]})\to -\infty,\,\forall \{x^{[n]}:n\in\mathbb N\}\subseteq \mathbb R^{M}_{++},$ $x^{[n]}\to\partial\mathbb R^{M}_{++}$ but there exists $\underline{x}\in\mathbb{R}_{++}^{M}$ such that
$S(\underline{x})=\left\{x\in\mathbb{R}_{++}^{M}:F(x)\geq F(\underline{x})\right\}$ is not closed in the topology of $\mathbb{R}^{M}.$ Then there exists a sequence $(\widehat x^{[n]})_n$ in $S(\underline{x})$ converging to a certain $x^*\notin S(\underline{x}).$ This means that $x^*\in \partial\mathbb R^{M}_{++}$ or $F(x^*)< F(\underline{x}).$ The latter possibility is however prevented by the continuity of $F.$ Then it has to be $x^*\in\partial\mathbb R^{M}_{++}.$ As a consequence $\widehat x^{[n]}\to x^*\in \partial\mathbb R^{M}_{++}$ and thus $F(\widehat x^{[n]})\to -\infty,$ but this is impossible as $F(\widehat x^{[n]})\geq F(\underline{x}),$ for any $n\in\mathbb N.$\\
$\Leftarrow:$ Assume by contradiction that for every $\underline{x}\in\mathbb{R}_{++}^{M},\;S(\underline{x})$ is closed in the topology of $\mathbb{R}^{M}$ but there exists $\{\widehat x^{[n]}:n\in\mathbb N\}\subseteq \mathbb R^{M}_{++},\,\widehat x^{[n]}\to\partial\mathbb R^{M}_{++}$ with $F(\widehat x^{[n]})\not\to -\infty.$ This means that 
\begin{equation}\label{seq}
\exists K>0: \forall\bar n\in\mathbb N,\,\exists n\ge\bar n \mbox{ with } F(\widehat x^{[n]})>-K.
\end{equation} 
As $F$ is lower unbounded, there exists ${x}^*\in\mathbb{R}_{++}^{M}$ such that $F({x}^*)=-K.$ But then $S({x}^*)$ is not closed in the topology of $\mathbb{R}^{M}.$ Indeed, for a subsequence of $(\widehat x^{[n]})_n$ as in \eqref{seq}, that we still denote by $(\widehat x^{[n]})_n,$ it holds that $F(\widehat x^{[n]})>-K=F({x}^*),$ and thus $(\widehat x^{[n]})_n$ is in $S({x}^*).$ However, $\widehat x^{[n]}\to\partial\mathbb R^{M}_{++}$ and so its limit point does not belong to $S({x}^*).$ The contradiction is found.
\end{proof}

\smallskip

\proofn
About \eqref{u1m}$\Rightarrow$\eqref{u1}, it is immediate to see that if $u$ is $C^2$ on $\mathbb R^{2C}_{++},$ then $U$ has the same property on $\mathbb R^{G}_{++}.$\\ 
Vice versa, as 
\begin{equation}\label{comp}
u(x_0,x_1)=a_1 u(x_0,x_1)+\dots+a_{S-1} u(x_0,x_1)+\Big(1-\sum_{s=1}^{S-1}a_s\Big) u(x_0,x_1)=U(x_0,x_1,\dots,x_1),
\end{equation}
if $U\in C^2(\mathbb R^{G}_{++}),$ then $u\in C^2(\mathbb R^{2C}_{++})$ and thus \eqref{u1}$\Rightarrow$\eqref{u1m}.

\smallskip

\noindent
It is straightforward to see that if $u$ fulfills \eqref{u2m} then $U$ in \eqref{eu} satifies \eqref{u2}, as
\begin{equation}\label{du}
DU(x_0,x_1,\dots,x_S)=\left(\underset{s=1}{\overset{S}{\sum}}a_s D_{x_0}u(x_0,x_s),\,a_1\, D_{x_1}u(x_0,x_1),\dots, a_S\, D_{x_S}u(x_0,x_S)\right),
\end{equation}
where $D_{x_i}u(x_0,x_s)$ denotes the partial Jacobian of $u(x_0,x_s)$ with respect to $x_i,$ for $i\in\{0,s\}$ and $s\in\{1,\dots,S\}.$\\
The implication \eqref{u2}$\Rightarrow$\eqref{u2m} follows again by \eqref{comp}, as 
\begin{equation}\label{d1}
Du(x,y)=DU(x,y,\dots,y)=(D_{x_0}U(x,y,\dots,y),D_{x_1}U(x,y,\dots,y)+\dots+D_{x_S}U(x,y,\dots,y)),
\end{equation}
where, for $U(x_0,x_1,\dots,x_S),$ we have set $D_{x_s}U(x,y,\dots,y)=D_{x_s}U(x_0,x_1,\dots,x_S)|_{(x,y,\dots,y)},$ with $s\in\{0,\dots,S\}\,.$

\smallskip

\noindent
Let us turn to \eqref{u3m}$\Rightarrow$\eqref{u3}. The computations of $D^2 u(x_0,x_s),$ with $s\in\{1,\dots,S\},$ and $D^2 U(x_0,x_1,\dots,x_S)$ are as follows\footnote{Notice that throughout the note, since \eqref{u1} and \eqref{u1m} hold true, we use Schwarz Lemma when computing the partial Hessians of $u$ and $U.$}:
\begin{equation}\label{d2us}
D^2 u(x_0,x_s)=\left(
\begin{array}
[c]{ll}
D^2_{x_0,x_0}u(x_0,x_s)& \,\, D^2_{x_0,x_s}u(x_0,x_s)\\
\\
D^2_{x_0,x_s}u(x_0,x_s) & \,\,D^2_{x_s,x_s}u(x_0,x_s)
\end{array}
\right)
\end{equation}
and $D^2 U(x_0,x_1,\dots,x_S)=$
\begin{equation}\label{d2u}
\left(
\begin{array}
[c]{ccccc}
\underset{s=1}{\overset{S}{\sum}} a_s\, D^2_{x_0,x_0}u(x_0,x_s) & a_1\, D^2_{x_0,x_1}u(x_0,x_1) &  a_2\, D^2_{x_0,x_2}u(x_0,x_2) & \dots & a_S\, D^2_{x_0,x_S}u(x_0,x_S)\\
\\
a_1\, D^2_{x_0,x_1}u(x_0,x_1) & a_1\, D^2_{x_1,x_1}u(x_0,x_1) & 0 & \ldots & 0\\
\\
a_2\, D^2_{x_0,x_2}u(x_0,x_2) &  0 & a_2\, D^2_{x_2,x_2}u(x_0,x_2) &  & 0\\
\\
\vdots & \vdots & & \ddots & \\
\\
a_S\, D^2_{x_0,x_S}u(x_0,x_S) & 0 & \dots & 0 & a_S\, D^2_{x_S,x_S}u(x_0,x_S)
\end{array}
\right)
\end{equation}
where $D^2_{x_i,x_j}u(x_0,x_s)$ denotes the partial Hessian of $u(x_0,x_s)$ with respect to $x_i$ and $x_j,$ for $i,j\in\{0,s\}$ and $s\in\{1,\dots,S\}.$\\
Since $u:\mathbb R^{2C}_{++}\to\mathbb R$ satisfies \eqref{u3m}, for any $w=(w_0,w_s)\in\mathbb{R}^{2C}\setminus\{0\}$ and $(x,y)\in\mathbb{R}_{++}^{2C},\, 
w \,D^{2}u(x,y)\,w<0,$ i.e., using \eqref{d2us},
\begin{equation}\label{w1}
w_0D^2_{x_0,x_0}u(x,y)w_0+2w_0D^2_{x_0,x_s}u(x,y)w_s+w_sD^2_{x_s,x_s}u(x,y)w_s<0,
\end{equation}
where, for $u(x_0,x_s),$ we have set $D^2_{x_i,x_j}u(x,y)=D^2_{x_i,x_j}u(x_0,x_s)|_{(x,y)},$ for $i,j\in\{0,s\}$ and $s\in\{1,\dots,S\}\,.$
We want to prove that $U:\mathbb R^{G}_{++}\to\mathbb R$ fulfills \eqref{u3}, i.e., for any $v=(v_0,v_1,\dots,v_S)\in\mathbb{R}^{G}\setminus \{0\}$ and $(\overline x_0,\overline x_1,\dots,\overline x_S)\in\mathbb{R}_{++}^{G},$
\begin{equation}\label{v1}
\begin{array}{ll}
&\left(v_0\left(\underset{s=1}{\overset{S}{\sum}} a_s\, D^2_{x_0,x_0}u(\overline x_0,\overline x_s)\right)+v_1 a_1 D^2_{x_0,x_1}u(\overline x_0,\overline x_1)+\dots+v_S a_S D^2_{x_0,x_S}u(\overline x_0,\overline x_S)\right)v_0+\\
\vspace{-2mm}\\
&\left(v_0 a_1 D^2_{x_0,x_1}u(\overline x_0,\overline x_1)+v_1 a_1 D^2_{x_1,x_1}u(\overline x_0,\overline x_1)\right)v_1+\\
&\qquad\qquad\qquad\qquad\qquad\quad\vdots\\
&\left(v_0 a_S D^2_{x_0,x_S}u(\overline x_0,\overline x_S)+v_S a_S D^2_{x_S,x_S}u(\overline x_0,\overline x_S)\right)v_S<0,
\end{array}
\end{equation}
where we have used \eqref{d2u}. Rewriting \eqref{v1} as 
\begin{equation}\label{v2}
\begin{array}{ll}
&a_1\left(v_0D^2_{x_0,x_0}u(\overline x_0,\overline x_1)v_0+2v_0D^2_{x_0,x_1}u(\overline x_0,\overline x_1)v_1+v_1D^2_{x_1,x_1}u(\overline x_0,\overline x_1)v_1\right)+\\
&\qquad\qquad\qquad\qquad\qquad\qquad\qquad\qquad\vdots\\
&a_S\left(v_0D^2_{x_0,x_0}u(\overline x_0,\overline x_S)v_0+2v_0 D^2_{x_0,x_S}u(\overline x_0,\overline x_S)v_S+v_SD^2_{x_S,x_S}u(\overline x_0,\overline x_S)v_S\right)<0,
\end{array}
\end{equation}
it is immediate to see that the desired property follows by \eqref{w1}, choosing $w=(v_0,v_s)$ and $(x,y)=(\overline x_0,\overline x_s),$ with $s\in\{1,\dots,S\}\,.$ \footnote{Actually, in order to employ \eqref{u3m}, we should know that, for $s\in\{1,\dots,S\},$ $(v_0,v_s)$ is a nonnull vector, since otherwise the left hand-side in \eqref{w1} would be equal to $0.$ However, since $v=(v_0,v_1,\dots,v_S)\in\mathbb{R}^{G}\setminus \{0\},$ at least one in $\{(v_0,v_1),\dots,(v_0,v_S)\}$ is nonnull and this is sufficient to conclude that \eqref{v2} holds true.}\\
Vice versa, let us assume that  $U:\mathbb R^{G}_{++}\to\mathbb R$ in \eqref{eu} fulfills \eqref{u3} and let us show that $u:\mathbb R^{2C}_{++}\to\mathbb R$ satisfies \eqref{u3m}. Using \eqref{comp} and \eqref{d1}, we get that 
\begin{equation}\label{d2}
\begin{array}{l}
D^2 u(x,y)=D^2 U(x,y,\dots,y)=\\
\\
\left(
\begin{array}
[c]{ll}
D^2_{x_0,x_0}U(x,y,\dots,y) & \,\, \underset{s\in\{1,\dots,S\}}{\sum}D^2_{x_0,x_s}U(x,y,\dots,y)\\
\\
\underset{s\in\{1,\dots,S\}}{{\sum}}D^2_{x_0,x_s}U(x,y,\dots,y) & \,\,
\underset{i,j\in\{1,\dots,S\}}{{\sum}}D^2_{x_i,x_j}U(x,y,\dots,y)
\end{array}
\right)
\end{array}
\end{equation}
where, for $U(x_0,x_1,\dots,x_S),$ we have set $D^2_{x_i,x_j}U(x,y,\dots,y)=D^2_{x_i,x_j}U(x_0,x_1,\dots,x_S)|_{(x,y,\dots,y)},$ with $i,j\in\{0,\dots,S\}\,.$\\
Since $U:\mathbb R^{G}_{++}\to\mathbb R$ in \eqref{eu} fulfills \eqref{u3}, for any $v=(v_0,v_1,\dots,v_S)\in\mathbb{R}^{G}\setminus \{0\}$ and $(\overline x_0,\overline x_1,\dots,\overline x_S)\in\mathbb{R}_{++}^{G},$ 
$v \,D^{2}U(\overline x_0,\overline x_1,\dots,\overline x_S)\,v<0,$ i.e.,
\begin{equation}\label{v}
\sum_{i,j\in\{0,\dots,S\}}v_i D^2_{x_i,x_j}U(\overline x_0,\overline x_1,\dots,\overline x_S)v_j<0.
\end{equation}
Using \eqref{d2}, we rewrite \eqref{u3m} as follows:
for any $w=(z,t)\in\mathbb{R}^{2C}\setminus \{0\}$ and $(x,y)\in\mathbb{R}_{++}^{2C},$ 
\begin{equation}\label{w}
zD^2_{x_0,x_0}U(x,y,\dots,y)z+2\,z\sum_{s=1}^S D^2_{x_0,x_s}U(x,y,\dots,y)\,t+t\sum_{i,j\in\{1,\dots,S\}}D^2_{x_i,x_j}U(x,y,\dots,y)\,t<0.
\end{equation}
But that comes by \eqref{v}, choosing $v=(z,t,\dots,t)$ and $(\overline x_0,\overline x_1,\dots,\overline x_S)=(x,y,\dots,y).$
 
\smallskip

\noindent
Let us now assume that the continuous and lower unbounded function $u:\mathbb R^{2C}_{++}\to\mathbb R$ satisfies \eqref{u4m} and we prove that $U$ in \eqref{eu} fulfills $\eqref{u4}.$ Applying Proposition \ref{pr} to $u,$ we find that $u(x_0^{[n]},x_s^{[n]})\to -\infty,$ for any sequence $\{(x_0^{[n]},x_s^{[n]}):n\in\mathbb N\}\subseteq\mathbb R^{2C}_{++},\,(x_0^{[n]},x_s^{[n]})\to\partial\mathbb R^{2C}_{++},$ for $s\in\{1,\dots,S\}.$  We are going to check that $U$ in \eqref{eu} satisfies the following property
\begin{equation}\label{u4e}
U(x_0^{[n]},x_1^{[n]},\dots,x_S^{[n]})\to -\infty,\,\forall \{(x_0^{[n]},x_1^{[n]},\dots,x_S^{[n]}):n\in\mathbb N\}\subseteq\mathbb R^{G}_{++},\,(x_0^{[n]},x_1^{[n]},\dots,x_S^{[n]})\to\partial\mathbb R^{G}_{++},
\end{equation}
so that, by Proposition \ref{pr}, we can conclude that $U$ fulfills $\eqref{u4},$ as desired. Notice that if $u$ is continuous and lower unbounded, then $U$ displays the same properties and thus it is in fact possible to apply Proposition \ref{pr} to that function.
Let $(x_0^{[n]},x_1^{[n]},\dots,x_S^{[n]})_{{}_n}$ be a sequence in $\mathbb R^{G}_{++}$ tending to $\partial\mathbb R^{G}_{++}.$ Then $x_{\bar s}^{[n]}\to\partial\mathbb R^{C}_{++},$ for at least one $\bar s\in\{0,1,\dots,S+1\}.$ If $\bar s\in\{1,\dots,S\},$ then $u(x_0^{[n]},x_{\bar s}^{[n]})\to -\infty.$ If instead $\bar s=0,$ then $u(x_0^{[n]},x_{s}^{[n]})\to -\infty,$ for every $s\in\{1,\dots,S\}.$ In both cases,   
$U(x_0^{[n]},x_1^{[n]},\dots,x_S^{[n]})\to-\infty$ because it is the sum of terms tending to $-\infty$ plus, by the continuity of $u,$ bounded quantities (if any). Condition \eqref{u4e} is thus checked.\\ 
The implication \eqref{u4}$\Rightarrow$\eqref{u4m} follows by \eqref{comp}, applying again Proposition \ref{pr} to $U$ and $u.$
\eop

\bigskip

We end this note investigating what happens if we replace \eqref{u3} and \eqref{u3m} with the weaker conditions
\begin{equation}\label{u3w}
\mbox{for every }v\in\mathbb{R}^{G}\setminus \{0\}\mbox{ and }x\in\mathbb{R}_{++}^{G}, \;DU(x)\, v=0\, \mbox{ implies }\,v \,D^{2}U(x)\,v<0
\end{equation}
and 
\begin{equation}\label{u3mw}
\mbox{ \,\,\,\,for every }w\in\mathbb{R}^{2C}\setminus \{0\}\mbox{ and }x\in\mathbb{R}_{++}^{2C},\;Du(x)\, w=0\, \mbox{ implies }\, w \,D^{2}u(x)\,w<0,
\end{equation}
respectively.\\
We notice that an argument similar to the one used in the proof of Theorem \ref{th} to check that \eqref{u3}$\Rightarrow$\eqref{u3m} also shows that if $U:\mathbb R^{G}_{++}\to\mathbb R$ in \eqref{eu} satisfies \eqref{u3w} then $u:\mathbb R^{2C}_{++}\to\mathbb R$ fulfills \eqref{u3mw}.\\
Indeed let us assume that, whenever $DU(\overline x_0,\overline x_1,\dots,\overline x_S)v=0,$ then \eqref{v} holds, where $v=(v_0,v_1,\dots,v_S)\in\mathbb{R}^{G}\setminus \{0\}$ and $(\overline x_0,\overline x_1,\dots,\overline x_S)\in\mathbb R^{G}_{++}.$ We have to show that if $Du(x,y)w=0$ then \eqref{w} holds true, where $w=(z,t)\in\mathbb{R}^{2C}\setminus \{0\}$ and $(x,y)\in\mathbb{R}^{2C}_{++}.$ But this follows by \eqref{v} choosing $v=(z,t,\dots,t)$ and 
$(\overline x_0,\overline x_1,\dots,\overline x_S)=(x,y,\dots,y)$ since, by \eqref{d1},
$$
\begin{array}{ll}
Du(x,y)w=DU(x,y,\dots,y)w=\\
D_{x_0}U(x,y,\dots,y)z+\left(D_{x_1}U(x,y,\dots,y)+\dots+D_{x_S}U(x,y,\dots,y)\right)t=
DU(\overline x_0,\overline x_1,\dots,\overline x_S)v,
\end{array}
$$
where, for $U(x_0,x_1,\dots,x_S),$ we have set $D_{x_s}U(x,y,\dots,y)=D_{x_s}U(x_0,x_1,\dots,x_S)|_{(x,y,\dots,y)},$ with $s\in\{0,\dots,S\}\,.$\\
Instead the vice versa, i.e., that if $u:\mathbb R^{2C}_{++}\to\mathbb R$ satisfies \eqref{u3mw}, then $U:\mathbb R^{G}_{++}\to\mathbb R$ in \eqref{eu} fulfills \eqref{u3w}, does not seem to hold in general. 
We try to explain what is the point. By \eqref{du}, for $v=(v_0,v_1,\dots,v_S)\in\mathbb{R}^{G}\setminus \{0\}$ and $(\overline x_0,\overline x_1,\dots,\overline x_S)\in\mathbb R^{G}_{++},$ $DU(\overline x_0,\overline x_1,\dots,\overline x_S)v=0$ can be written as
\begin{equation}\label{eq}
\begin{array}{l}
DU(\overline x_0,\overline x_1,\dots,\overline x_S)v=\biggl(\underset{s=1}{\overset{S}{\sum}}a_s D_{x_0}u(\overline x_0,\overline x_s),
a_1 D_{x_1}u(\overline x_0,\overline x_1),\dots, a_S D_{x_S}u(\overline x_0,\overline x_S)\biggr)v=\\
a_1\left(D_{x_0}u(\overline x_0,\overline x_1)v_0+D_{x_1}u(\overline x_0,\overline x_1)v_1\right)+\dots+a_S\left(D_{x_0}u(\overline x_0,\overline x_S)v_0+D_{x_S}u(\overline x_0,\overline x_S)v_S\right)=0,
\end{array}
\end{equation}
where, for $u(x_0,x_s),$ we have set $D_{x_i}u(\overline x_0,\overline x_s)=D_{x_i}u(x_0,x_s)|_{(\overline x_0,\overline x_s)},$
with $i\in\{0,s\}$ and $s\in\{1,\dots,S\}.$ \\
In order to use \eqref{u3mw} with $w=(v_0,v_s)$ and $x=(\overline x_0,\overline x_s),$ for $s\in\{1,\dots,S\},$ to obtain\footnote{Actually, in order to employ \eqref{u3mw}, we should know that $(v_0,v_s),$ for $s\in\{1,\dots,S\},$ are nonnull vectors, since otherwise the left hand-side in \eqref{bar1} would be equal to $0.$ However, since $v=(v_0,v_1,\dots,v_S)\in\mathbb{R}^{G}\setminus \{0\},$ at least one in $\{(v_0,v_1),\dots,(v_0,v_S)\}$ is nonnull and this is sufficient to conclude that \eqref{dd2} holds true.} 
\begin{equation}\label{bar1}
v_0D^2_{x_0,x_0}u(\overline x_0,\overline x_s)v_0+2v_0D^2_{x_0,x_s}u(\overline x_0,\overline x_s)v_s +v_sD^2_{x_s,x_s}U(\overline x_0,\overline x_s)v_s <0,\quad s\in\{1,\dots,S\},
\end{equation}
so that, recalling \eqref{d2u},
\begin{equation}\label{dd2}
\begin{array}{ll}
\!\!\!\!\!\!\!v D^2U(\overline x_0,\overline x_1,\dots,\overline x_S)v=
a_1\left(v_0D^2_{x_0,x_0}u(\overline x_0,\overline x_1)v_0+2v_0D^2_{x_0,x_1}u(\overline x_0,\overline x_1)v_1+v_1D^2_{x_1,x_1}u(\overline x_0,\overline x_1)v_1\right)+\\
\qquad\qquad\qquad\qquad\qquad\qquad\qquad\qquad\qquad\qquad\qquad\qquad\,\,\,\,\vdots\\
\qquad\qquad\qquad\qquad\qquad a_S\left(v_0D^2_{x_0,x_0}u(\overline x_0,\overline x_S)v_0+2v_0 D^2_{x_0,x_S}u(\overline x_0,\overline x_S)v_S+v_SD^2_{x_S,x_S}u(\overline x_0,\overline x_S)v_S\right)<0,
\end{array}
\end{equation}
we should know that $D_{x_0}u(\overline x_0,\overline x_s)v_0+D_{x_s}u(\overline x_0,\overline x_s)v_s=0$, for $s\in\{1,\dots,S\}.$ This is however only a sufficient condition in order for \eqref{eq} to hold, but not a necessary one. For such reason we have no arguments to conclude that \eqref{dd2} holds true every time that \eqref{eq} is satisfied.


\begin{thebibliography}{2}

\bibitem{ali-00}
\textsc{C.D. Aliprantis and S.K. Chakrabarti},
\textit{Games and decision making}, Oxford University Press, New York, 2000.

\bibitem{mas-95}
\textsc{A. Mas-Colell, M.D. Whinston and J.R. Green},
\textit{Microeconomic Theory}, Oxford University Press, New York, 1995.

\end{thebibliography}
\end{document}